\documentclass[12pt,final]{article}
\usepackage{amsmath,amsthm,amsfonts,amssymb,graphicx,enumerate,psfrag}
\usepackage{tikz,pgfplots}
\usepackage{mathrsfs}
\usepackage{fullpage}

\usepackage[colorlinks,citecolor=blue,urlcolor=blue]{hyperref}
\definecolor{myblue}{RGB}{51,51,178}
\definecolor{myred}{RGB}{189,26,26}
\definecolor{mygreen}{RGB}{0,128,0}
\usepackage[utf8]{inputenc} 
\newtheorem{lemma}{Lemma}

\newtheorem{theorem}{Theorem}
\newtheorem{example}{Example}

\newtheorem{corollary}{Corollary}

\newcommand{\dN}{\mathbb {N}}

\newcommand{\I}{\mathcal{I}}

\newcommand{\cE}{\mathcal {E}}

\newcommand{\dR}{\mathbb {R}}
\newcommand{\dd}{\mathrm{d}}

\newcommand{\EE}{{\mathbb{E}}}
\newcommand{\PP}{{\mathbb{P}}}

\newcommand{\tmix}{\mathrm{t}_{\textsc{mix}}}
\newcommand{\wmix}{{\mathrm{w}_{\textsc{mix}}}}

\newcommand{\tv}{{\textsc{tv}}}
\newcommand{\Ent}{{\mathrm{Ent}}}

\newcommand{\Var}{{\mathrm{Var}}}

\newcommand{\Varent}{{\mathrm{Varent}}}

\title{Cutoff for non-negatively curved diffusions}
\author{Justin Salez}
\begin{document}
\maketitle
\begin{abstract}We resolve the long-standing problem of elucidating the cutoff phenomenon for a vast and important class of Markov processes, namely Markov diffusions with non-negative Bakry-\'Emery curvature. More precisely, we prove that any sequence of non-negatively curved  diffusions exhibits cutoff in total variation as soon as the product condition is satisfied. Our result holds in Euclidean spaces as well as on Riemannian manifolds, and for arbitrary non-random initial conditions. It vastly simplifies, unifies and generalizes a number of isolated works that have established cutoff through a delicate and model-dependent analysis of mixing times. The proof is elementary: we exploit a new simple differential relation between varentropy and  entropy to produce a quantitative bound on the width of the mixing window.
\end{abstract}

\section{Introduction}
Discovered four decades ago in the context of card shuffling \cite{aldous1986shuffling,aldous1983mixing,diaconis1996cutoff}, the cutoff phenomenon is an abrupt transition from out-of-equilibrium to equilibrium undergone by certain Markov processes: instead of decaying gradually over time, their total-variation distance to equilibrium remains close to the maximal value for a while and suddenly drops to zero as the time parameter reaches a critical threshold. Despite the accumulation of many examples, this phenomenon is
still far from being understood, and identifying the general conditions that trigger it has become one of the biggest challenges in the quantitative analysis of ergodic Markov processes. The present paper essentially solves this problem for a vast and important class of processes, namely non-negatively curved  diffusions. The latter occupy the center of the stage in the celebrated Bakry-\'Emery theory \cite{MR889476}, devoted to the analysis and geometry of Markov semi-groups.  We here only recall the necessary definitions, and refer the  reader to the excellent textbook \cite{MR3155209} for a comprehensive introduction to this beautiful framework.

\paragraph{Non-negatively curved diffusions.} Let $(E,\cE,\mu)$ be a probability space, and $(P_t)_{t\ge 0}$ a strongly continuous Markov semi-group of self-adjoint operators on $L^2(\mu)$. Its infinitesimal generator $L$ is defined by the formula
\begin{eqnarray*}
Lf & := & \lim_{t\to 0}\,\frac{P_tf-f}{t},
\end{eqnarray*}
for all $f\in L^2(\mu)$ such that the limit exists, and the  \emph{carré du champ} operator is given by
\begin{eqnarray*}
\Gamma (f,g) & := & \frac{1}{2}\left(L(fg)-fLg-gLf\right),
\end{eqnarray*}  
for all $f,g\in L^2(\mu)$ such that $f,g,fg\in\mathrm{Dom}(L)$. 
As usual, we simply write $\Gamma f$ when $f=g$. We make the following two important structural assumptions: 
\begin{enumerate}[(i)]
\item Chain rule: $\Gamma\left(\phi(f),g\right) = \phi'(f)\Gamma(f,g)$, for all  smooth functions $\phi\colon \dR\to\dR$. 
\item Non-negative curvature in the $\mathrm{CD}(0,\infty)$ sense: $\Gamma P_s g\le P_s\Gamma g$ for all $s\ge 0$.
\end{enumerate}
More precisely, we want those properties to hold for a rich enough class of functions $f,g\in L^2(\mu)$, in a sense that we now make clear. Consider  a Markov process $(X_t)_{t\ge 0}$ with the above generator, starting from an arbitrary point $x\in E$. We assume that for each $t>0$, the random variable $X_t$ admits a density $f_t$ with respect to $\mu$, and that both $f_t$ and $\log f_t$ are in $\mathrm{Dom}(L)$, as well as their squares and product. Moreover, we require that the conditions (i)-(ii) hold when $f=f_t$, $g=\log f_t$ and $\phi=\log$. Markov processes satisfying those requirements will henceforth be referred to as \emph{non-negatively curved diffusions}. An emblematic example is the Langevin diffusion in a convex potential. 
\begin{example}[Langevin diffusion in a convex potential]\label{ex:1} Let the $d-$dimensional Euclidean space $\dR^d$ be equipped with its Borel $\sigma-$algebra and a probability measure of the form
\begin{eqnarray}
\label{mu}
\mu(\dd x) & = & e^{-U(x)}\, \dd x,
\end{eqnarray}
where $U\colon\dR^d\to\dR$ is smooth and convex. Consider the stochastic differential equation
\begin{eqnarray}
\label{SDE}
\dd X_t & = & -\nabla U(X_t)\,\dd t+ \sqrt{2}\,\dd B_t,
\end{eqnarray}
where $(B_t)_{t\ge 0}$ is a standard $d-$dimensional Brownian motion. Then, the formula
\begin{eqnarray}
\label{def:Pt}
P_tf(x) & := & \EE\left[f(X_t)|X_0=x\right],
\end{eqnarray}
defines a Markov semi-group of self-adjoint operators on $L^2(\mu)$, whose generator and carré du champ operators act on smooth functions $f,g\colon\dR^d\to\dR$ as follows:
\begin{eqnarray}
\label{def:L}
Lf \ = \ \Delta f - \nabla U \cdot \nabla f, & \textrm{ and } & 
\Gamma(f,g) \ = \ \nabla f\cdot\nabla g.
\end{eqnarray}
In particular, the chain rule (i) trivially holds. Moreover, the non-negative curvature condition (ii) classically boils down to the convexity of $U$; see  \cite{MR889476,MR3155209} for details. 
\end{example}
More generally, the  Euclidean space $\dR^d$  can be replaced with any weighted Riemannian manifold, provided  the formulae (\ref{mu}-\ref{def:L}) are interpreted in an appropriate way. 
\begin{example}[Langevin diffusion on a weighted Riemannian manifold]\label{ex:2} Let  $M$ be  a Riemannian manifold equipped with a metric tensor $g$, and let $\mu$ be a probability measure on $M$ of the form (\ref{mu}), where $\dd x$ is  the volume measure on $M$ and $U\colon M\to\dR$ a geodesically semi-convex function. Now, consider the Langevin equation (\ref{SDE}), with $\nabla$ denoting the gradient in  $(M,g)$ and $(B_t)_{t\ge 0}$ a Brownian motion thereon.  Then, the formula (\ref{def:Pt})  defines a Markov semi-group of self-adjoint operators on $L^2(\mu)$ whose generator and carré du champ operators act as in (\ref{def:L}), with $\Delta$  denoting the Laplace-Beltrami operator on $(M,g)$. The chain rule (i) is again trivially satisfied, and the curvature condition  (ii) is known to holds as soon as 
\begin{eqnarray*}
 \mathrm{Ric}+\nabla^2U & \ge & 0,
 \end{eqnarray*}
where $\mathrm{Ric}$ denotes the Ricci curvature tensor. We again refer to \cite{MR889476,MR3155209} for details. 
\end{example}
\paragraph{Mixing times.} Consider a non-negatively curved diffusion $(X_t)_{t\ge 0}$ as above. Under mild assumptions, the law of $X_t$ will approach the equilibrium distribution $\mu$ as $t\to\infty$, and it is natural  to ask for the time-scale on which this convergence occurs. This is formalized by the notion of \emph{mixing times} \cite{MR3726904}, defined for any precision $\varepsilon\in(0,1)$ by
\begin{eqnarray*}
\tmix(\varepsilon) \ := \ \inf\{t>0\colon \tv(X_t)\le \varepsilon\}, & \textrm{where} &  \tv(X)  =  \sup_{A\in\cE}\left|\PP(X\in A)-\mu(A)\right|.
\end{eqnarray*}
Understanding how this quantity depends on the dynamics has been the subject of many works. Rather than asking \emph{how long} the process needs in order to approach equilibrium, we here   seek to understand \emph{how abrupt} the transition from out-of-equilibrium to equilibrium is. In other words, we ask for  the \emph{width} of the mixing window
\begin{eqnarray*}
\wmix(\varepsilon) & := & \tmix(\varepsilon)-\tmix(1-\varepsilon), 
\end{eqnarray*}
for $\varepsilon\in(0,1/2)$. The analysis of this second-order quantity is a notoriously difficult task, which has only been carried out in a handful of models with enough structure to allow for a more or less complete understanding of the  distance to equilibrium $t\mapsto \tv(X_t)$. 

\paragraph{Main result.} We provide a universal estimate on the width of the mixing window of any non-negatively curved diffusion, in terms of the \emph{spectral gap} of $L$.  Recall that the latter coincides with the largest  $\lambda\ge 0$ such that for any $f\in L^2(\mu)$ with $\mu(f)=0$ and any $t\ge 0$,
\begin{eqnarray}
\label{gap}
\|P_tf\|_2 & \le & e^{-\lambda t}\,\|f\|_2.
\end{eqnarray}
We assume that the spectral gap is positive, so that our statement below is not empty. 

\begin{theorem}[Mixing window]\label{th:main}The mixing window of any non-negatively curved diffusion starting from a deterministic point   satisfies 
\begin{eqnarray*}
\wmix(\varepsilon) & \le & \frac{3}{\lambda\varepsilon^3}+3\sqrt{\frac{\tmix(1-\varepsilon)}{\lambda\varepsilon^3}},
\end{eqnarray*}
for any $\varepsilon\in\left(0,\frac{1}{2}\right)$, where $\lambda$ is the spectral gap.
\end{theorem}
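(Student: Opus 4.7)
The plan is to introduce the \emph{varentropy}
\begin{equation*}
V(t) \,:=\, \Var_{f_t\mu}(\log f_t) \,=\, \int f_t (\log f_t - H(t))^2 \, \dd\mu, \qquad H(t) := \Ent(f_t),
\end{equation*}
and use it as a quantitative bridge between entropy and total-variation distance. Step~1 is a \emph{reverse-Pinsker-type} estimate: by Chebyshev, the set $A_k := \{|\log f_t - H(t)| \le k\sqrt{V(t)}\}$ satisfies $\PP(X_t\in A_k) \ge 1 - 1/k^2$, while $f_t \le e^{H(t) + k\sqrt{V(t)}}$ on $A_k$ gives $\mu(A_k) \le e^{-H(t) + k\sqrt{V(t)}}$. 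Hence
\begin{equation*}
\tv(X_t) \,\ge\, \PP(X_t\in A_k) - \mu(A_k) \,\ge\, \bigl(1 - 1/k^2\bigr) - e^{-H(t) + k\sqrt{V(t)}},
\end{equation*}
and the choice $k\approx \sqrt{2/\varepsilon}$ shows that $\tv(X_t) \le 1-\varepsilon$ forces $H(t) \le \sqrt{2V(t)/\varepsilon} + \log(2/\varepsilon)$.

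Step~2 is the promised \emph{differential relation}. A direct computation using the chain rule~(i) gives $H'(t) = -I(t)$, where $I(t) := \int f_t\Gamma(\log f_t)\,\dd\mu$ is the Fisher information, and an analogous calculation yields
\begin{equation*}
V'(t) \,=\, -2I(t) - 2\,\mathrm{Cov}_{f_t\mu}\!\bigl(\log f_t, \Gamma(\log f_t)\bigr).
\end{equation*}
Combining the non-negative curvature hypothesis~(ii) with the Poincar\'e inequality associated with the spectral gap, I would then derive a clean bound of the form $V(t) \le c\,I(t)/\lambda$ for a universal constant $c$. This is, in my reading, the \emph{simple differential relation between varentropy and entropy} advertised in the abstract: under non-negative curvature the Fisher information controls the varentropy on top of the entropy production.

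These ingredients combine as follows. Throughout the mixing window $t \in [\tmix(1-\varepsilon), \tmix(\varepsilon)]$ one has $\tv(X_t) \le 1-\varepsilon$, so Steps~1 and~2 yield, in the regime $H(t) \gg \log(2/\varepsilon)$, the Riccati-type inequality $H'(t) \le -c\lambda\varepsilon H(t)^2$. Integrating from $t_0 := \tmix(1-\varepsilon)$ and closing with Pinsker $\tv(X_t)^2 \le H(t)/2$ produces the $O(1/(\lambda\varepsilon^3))$ principal term. The additive $O(\sqrt{\tmix(1-\varepsilon)/(\lambda\varepsilon^3)})$ correction should emerge from tracking the $\log(2/\varepsilon)$ lower-order term in Step~1, whose contribution is harmless while $H$ is large but cumulates to a square-root term depending on the length of the pre-mixing phase. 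The main obstacle is Step~2: since $V$ is a variance with respect to $f_t\mu$ rather than $\mu$, the Poincar\'e inequality for $\mu$ does not apply directly, and non-negative curvature must enter through the commutation $\Gamma P_s \le P_s\Gamma$ to bridge the two measures.
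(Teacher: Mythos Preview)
Your Step~1 is essentially the paper's reverse Pinsker lemma, so that part is fine. The genuine gap is Step~2, and you have correctly located it yourself but misidentified the fix.

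You propose to show $V(t)\le cI(t)/\lambda$ by combining non-negative curvature with the spectral-gap Poincar\'e inequality. This cannot work: the spectral-gap inequality is a Poincar\'e inequality for the \emph{stationary} measure $\mu$, whereas $V(t)$ is a variance under $f_t\mu$ and $I(t)$ is an expectation under $f_t\mu$. There is no mechanism to transfer the constant $\lambda^{-1}$ across. In fact a uniform-in-time bound $V(t)\le cI(t)$ is essentially the content of $\mathrm{CD}(\kappa,\infty)$ with $\kappa>0$ (and is exactly what the paper uses in its Theorem~2); under mere $\mathrm{CD}(0,\infty)$ it is not available.

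What the paper actually uses is the \emph{local} Poincar\'e inequality $P_t(g^2)-(P_tg)^2\le 2t\,P_t(\Gamma g)$, an immediate consequence of the sub-commutation $\Gamma P_s\le P_s\Gamma$. Starting from a Dirac mass this reads $\Var_{f_t\mu}(g)\le 2t\,\EE_{f_t\mu}[\Gamma g]$, and with $g=\log f_t$ it gives directly $V(t)\le 2t\,I(t)$, i.e.\ $H'(t)\le -V(t)/(2t)$. So the ``simple differential relation'' involves the time $t$, not the gap $\lambda$. Combined with Step~1 one gets $H'(t)\le -(\varepsilon H(t)-1)^2/(2t)$ on $[t_0,\infty)$, which integrates to $H(t)\le \varepsilon^{-1}+2\varepsilon^{-2}/\log(t/t_0)$.

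Two further points. First, the spectral gap enters the argument \emph{only} through the entropy-to-mixing estimate $\tmix(\varepsilon)\le t+(1+H(t))/(\lambda\varepsilon)$, not through the varentropy bound; you should not close with Pinsker $\tv^2\le H/2$ but with this lemma. Second, your explanation for the $\sqrt{t_0/(\lambda\varepsilon^3)}$ term is wrong: it does not come from the $\log(2/\varepsilon)$ remainder in Step~1 but from optimizing the free parameter $t>t_0$ in an expression of the form $t + c_1/(\lambda\varepsilon^3) + c_2 t_0/\bigl(\lambda\varepsilon^3(t-t_0)\bigr)$, which yields $t-t_0\asymp\sqrt{t_0/(\lambda\varepsilon^3)}$.
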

\paragraph{The cutoff phenomenon.} 
The interest of this estimate lies in the fact that it provides a very simple criterion for the occurrence of the  celebrated  \emph{cutoff phenomenon}. We refer the unfamiliar reader to the seminal works \cite{aldous1986shuffling,aldous1983mixing,diaconis1996cutoff} or the more recent paper \cite{MR4780485} for an introduction to this mysterious phase transition, illustrated on Figure \ref{fig:cutoff}. 
\begin{corollary}[Cutoff]\label{co:main}Consider a non-negatively curved diffusion whose state space, generator and initial state now depend on an implicit parameter $n\in\dN$, in such a way that the resulting spectral gap and mixing time satisfy 
\begin{eqnarray}
\label{PC}
\lambda\times \tmix(\varepsilon) & \xrightarrow[n\to\infty]{} & +\infty,
\end{eqnarray}
for some $\varepsilon\in(0,1)$. Then a cutoff occurs, in the sense that for all $\varepsilon\in(0,1)$,
\begin{eqnarray*}
\frac{\tmix(1-\varepsilon)}{\tmix(\varepsilon)} & \xrightarrow[n\to\infty]{} & 1.
\end{eqnarray*}
\end{corollary}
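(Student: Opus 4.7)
The plan is to apply Theorem \ref{th:main} at the target precision $\varepsilon$, after first upgrading the hypothesis (\ref{PC}) from a single precision $\varepsilon_0$ to every precision in $(0,1/2]$. The delivery step is a routine algebraic manipulation: dividing the bound of Theorem \ref{th:main} by $\tmix(\varepsilon)$ and invoking the trivial inequality $\tmix(1-\varepsilon)\le \tmix(\varepsilon)$ (valid for $\varepsilon\in(0,1/2)$), I will obtain
\begin{eqnarray*}
1 - \frac{\tmix(1-\varepsilon)}{\tmix(\varepsilon)} & \le & \frac{3}{\lambda\varepsilon^3\tmix(\varepsilon)} + \frac{3}{\sqrt{\lambda\varepsilon^3\tmix(\varepsilon)}}.
\end{eqnarray*}
As soon as $\lambda\tmix(\varepsilon)\to\infty$, the right-hand side tends to zero, yielding the claim for $\varepsilon\in(0,1/2)$; the range $\varepsilon\in(1/2,1)$ is then handled by swapping $\varepsilon$ and $1-\varepsilon$ in the ratio.

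The upgrade step---extending (\ref{PC}) from $\varepsilon_0$ to every $\varepsilon\in(0,1/2]$---is itself a consequence of Theorem \ref{th:main}. Since $\tmix(\cdot)$ is non-increasing, I may assume $\varepsilon_0\le 1/2$, in which case (\ref{PC}) already holds at every precision $\le\varepsilon_0$. To cover the remaining interval $(\varepsilon_0,1/2]$, I will apply Theorem \ref{th:main} at $\varepsilon_0$, multiply through by $\lambda$, and abbreviate $A:=\lambda\tmix(\varepsilon_0)$, $B:=\lambda\tmix(1-\varepsilon_0)$; this yields an inequality of the shape
\begin{eqnarray*}
A & \le & B + \frac{3}{\varepsilon_0^3} + \frac{3\sqrt{B}}{\varepsilon_0^{3/2}}.
\end{eqnarray*}
Since $A\to\infty$ by hypothesis, this forces $B\to\infty$; hence (\ref{PC}) holds at precision $1-\varepsilon_0\ge 1/2$. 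By monotonicity once more, (\ref{PC}) then propagates to every precision $\le 1-\varepsilon_0$, which in particular covers the desired interval $(\varepsilon_0,1/2]$.

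The only real subtlety is the interplay between these two invocations of Theorem \ref{th:main}: the first promotes (\ref{PC}) across the $1/2$-threshold, the second delivers the cutoff conclusion, and the two do not collide because they involve different target precisions. I expect no serious obstacle; the elementary implication ``$A\to\infty\Rightarrow B\to\infty$'' follows by a one-line contradiction argument (were $B$ bounded, so would be the right-hand side, contradicting $A\to\infty$), and the rest is bare monotonicity. The full proof should therefore fit in a few lines.
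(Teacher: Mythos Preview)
Your argument is correct and is exactly the derivation the paper intends: the paper does not spell out a proof of Corollary~\ref{co:main}, treating it as an immediate consequence of Theorem~\ref{th:main}, and your proposal cleanly fills in those details. In particular, your ``upgrade step''---using Theorem~\ref{th:main} at $\varepsilon_0$ to push the product condition across the $1/2$-threshold via the inequality $A\le B+3\varepsilon_0^{-3}+3\varepsilon_0^{-3/2}\sqrt{B}$---is the only point requiring any thought, and you handle it correctly (noting that if $\varepsilon_0=1/2$ the interval $(\varepsilon_0,1/2]$ is empty so Theorem~\ref{th:main} is not even invoked there).
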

\begin{figure}
\begin{center}
\pgfplotsset{compat=1.16}
\pgfplotsset{ticks=none}
\begin{tikzpicture}
	\begin{axis}[
		width = 14.5cm,
		height = 9cm,
		axis x line=middle,
		axis y line=middle,
		xlabel = $t$,
	    xlabel style ={at={(1,-0.1)}},
		ylabel style ={at={(0,1.05)}},
		ylabel = $\tv(X_t)$,
		clip = false,
		grid=both,
		grid style={dashed, line width=.5pt, draw=gray!10},
		xmode = normal,
		ymode = normal,
		line width = 1pt,
		legend cell align = left,
		legend style = {fill=none, at={(0.9,0.9)}, anchor = north east},
		yticklabel style={above left},
		anchor = north west,
		tickwidth={5pt},
		xtick align = outside,
		ytick align = outside,
		ymax = 1.05,
		ymin = 0,
		xmin = 0,
		xmax = 40,
		x axis line style=-,
		y axis line style=-,
		domain = 0:40,
		samples = 1000
		]
		\def\scale{3}
\addplot[solid,  myred, line width = 1.5 pt] {
-rad(atan(x-20))/rad(atan(20))/2 + 0.5	
	};
%
\draw[dashed, color= myblue] (15,0) -- (15,0.9515);
\draw[dashed, color= myblue] (0,0.9515) -- (15,0.9515);
\draw[dashed, color= mygreen] (25,0) -- (25,0.0484723);
\draw[dashed, color= mygreen] (0,0.0484723) -- (25,0.0484723);	

\draw[color= myblue] (0,0.9515) -- (-0.5,0.9515) node[color = black, anchor=east] {$\textcolor{myblue}{1-\varepsilon}$};
\draw[color= myblue] (15,0) -- (15,-0.02) node[color=black, anchor=north] {$\textcolor{myblue}{\tmix(1-\varepsilon)}$};
\draw[color= mygreen] (25,0) -- (25,-0.02) node[color = black, anchor=north] {${\textcolor{mygreen}{\tmix(\varepsilon)}}$};
\draw[color= mygreen] (0,0.0484723) -- (-0.5,0.0484723) node[color = black,anchor=east] {$\textcolor{mygreen}{\varepsilon}$};

\draw[color= black] (0,1.0) -- (-0.5,1.0) node[color = black,anchor=south east] {${1}$};

\end{axis}
	
\end{tikzpicture}
\caption{A typical plot of the distance to equilibrium $t\mapsto \tv(X_t)$. As the ratio $\frac{\tmix(1-\varepsilon)}{\tmix(\varepsilon)}$ approaches $1$, the transition to equilibrium becomes abrupt (cutoff).}
\label{fig:cutoff}
\end{center}
\end{figure}
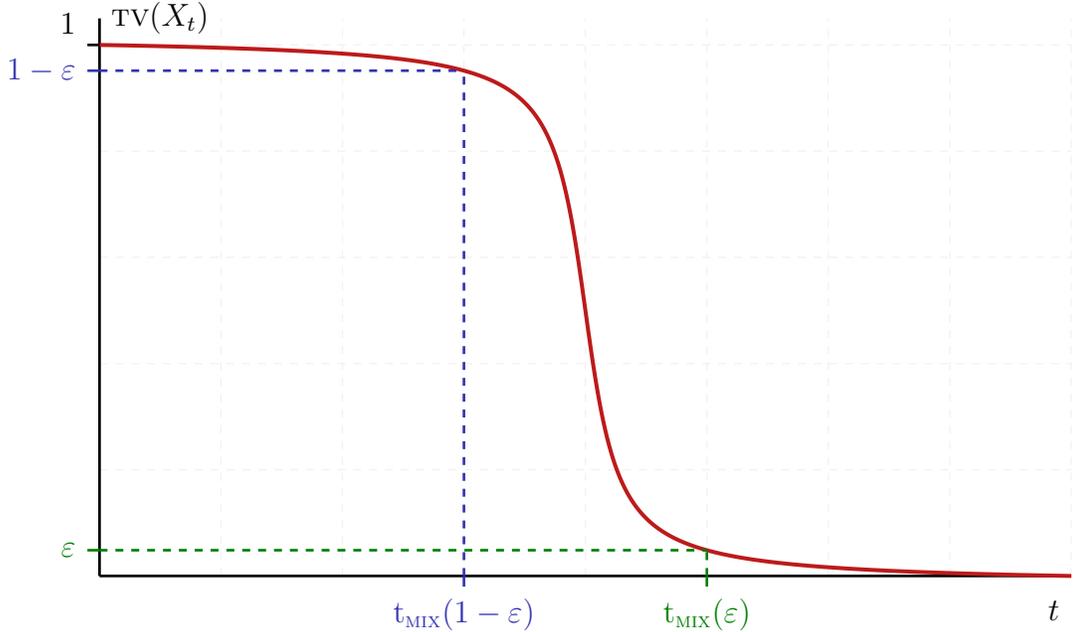
\paragraph{The product condition.} In the theory of mixing times, the assumption (\ref{PC}) is known as the \emph{product condition}. It was famously conjectured to imply cutoff for  all reversible Markov processes \cite{peresamerican}, and this has been verified on special ensembles such as birth-and-death chains \cite{ding2010total}, random walks on trees \cite{MR3650406}, or exclusion dynamics \cite{MR4546624}. However, the general conjecture was quickly disproved \cite[Section 6]{MR2375599}, and its failure is now known to be \emph{generic}, in the sense that the condition (\ref{PC}) is stable under a general rank-one perturbation which completely destroys cutoff; see  \cite[Example 18.7]{MR3726904}.  Nevertheless, by analogy with what is known for $L^p-$mixing when $p>1$ \cite{MR2375599},  the conjecture is still expected to hold for \emph{reasonable} processes. Our result confirms this long-standing prediction for non-negatively curved diffusions. 
\paragraph{Positive curvature.} Many examples of interest actually satisfy a stronger form of the sub-commutation property (ii), namely the $\mathrm{CD}(\kappa,\infty)$ criterion
\begin{eqnarray}
\label{cdk}
\forall t\ge 0,\qquad \Gamma P_t g & \le & e^{-2\kappa t}P_t\Gamma g,
\end{eqnarray}
for some $\kappa>0$ (see \cite{MR889476}). In particular, this holds in Example \ref{ex:1} as soon as $\nabla^2U\ge\kappa\mathrm{Id}$, and in Example \ref{ex:2} as soon as $\mathrm{Ric}+\nabla^2U\ge\kappa g$.  Under this positive curvature assumption, our main argument simplifies slightly and yields the following neater estimate.
\begin{theorem}[Mixing window under positive curvature]\label{th:positive}The mixing window of any positively curved diffusion starting from a deterministic point   satisfies 
\begin{eqnarray}
\label{window}
\wmix(\varepsilon) & \le & \frac{3}{\kappa\varepsilon^2},
\end{eqnarray}
for any $\varepsilon\in\left(0,\frac{1}{2}\right)$, where $\kappa$ is the curvature. In particular, cutoff occurs as soon as 
\begin{eqnarray}
\label{PC2}
\kappa \times \tmix(\varepsilon) & \xrightarrow[n\to\infty]{} & +\infty.
\end{eqnarray}
\end{theorem}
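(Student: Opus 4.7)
The plan is to control $\tv(X_t)$ through the joint dynamics of the relative entropy $H_t := \Ent(f_t)$ and the varentropy $V_t := \Varent(f_t) = \int f_t(\log f_t - H_t)^2\,\dd\mu$. I would first record two elementary sandwich bounds on $\tv$: Pinsker's inequality gives the upper bound $\tv(X_t) \le \sqrt{H_t/2}$, while applying Chebyshev's inequality under $f_t\mu$ to the centered log-density $\log f_t - H_t$ together with Markov's inequality under $\mu$ to the event $\{f_t > e^{H_t - K\sqrt{V_t}}\}$ yields the lower bound
\[
\tv(X_t) \;\ge\; 1 - \frac{1}{K^2} - e^{-H_t + K\sqrt{V_t}}, \qquad K > 0.
\]
These are the only places where $\tv$ enters the argument. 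Positive curvature then feeds in through the classical log-Sobolev inequality $I_t \ge 2\kappa H_t$, where $I_t := \int f_t\,\Gamma(\log f_t)\,\dd\mu$; combined with $\dot H_t = -I_t$, this produces the exponential entropy decay $H_t \le e^{-2\kappa(t-s)}\,H_s$ for $0 \le s \le t$.

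The heart of the argument is a new differential relation between $V_t$ and $H_t$. Differentiating $V_t$ and integrating by parts against $L$, using the chain rule applied to $\phi = \log$, yields the identity
\[
\dot V_t \;=\; -2I_t - 2\int (\log f_t - H_t)\,f_t\,\Gamma(\log f_t)\,\dd\mu.
\]
I would then show that the covariance-type remainder on the right --- which measures the correlation between $\log f_t$ and its squared gradient $\Gamma(\log f_t)$ under the tilted measure $f_t\mu$ --- is non-positive, using the chain rule together with the Bakry-\'Emery $\Gamma_2$-calculus and exploiting the log-concavity of the transition density inherited from $\mathrm{CD}(\kappa,\infty)$ combined with the Dirac initial condition. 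This sign information upgrades the identity to $\dot V_t \ge 2\dot H_t$, so that $V_t - 2H_t$ is non-decreasing. Since the process starts from a Dirac mass (forcing $H_{0^+}=+\infty$ while $V_{0^+}$ stays bounded, hence $V_t - 2H_t \to -\infty$ as $t \to 0^+$) and since $V_t - 2H_t \to 0$ at equilibrium, monotonicity delivers the clean pointwise bound $V_t \le 2H_t$ for all $t>0$. This is the promised differential relation between varentropy and entropy.

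Combining the three ingredients closes the proof. At $\tau := \tmix(1-\varepsilon)$, choosing $K = 2/\sqrt{\varepsilon}$ in the lower bound on $\tv$ and invoking $V_\tau \le 2H_\tau$ produces a quadratic inequality in $\sqrt{H_\tau}$ whose resolution gives $H_\tau \le C/\varepsilon$ for an explicit constant $C$. Exponential entropy decay then yields $H_{\tau+s} \le (C/\varepsilon)\,e^{-2\kappa s}$, and Pinsker drives $\tv(X_{\tau+s}) \le \varepsilon$ as soon as $s \ge \frac{1}{2\kappa}\log(C/(2\varepsilon^3))$; a routine numerical comparison with $3/(\kappa\varepsilon^2)$ for $\varepsilon \in (0,1/2)$ delivers the window estimate, and the cutoff consequence under (\ref{PC2}) is immediate. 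The only step where real work is required is establishing the sign of the covariance-type remainder in the identity for $\dot V_t$; everything else is soft analysis.
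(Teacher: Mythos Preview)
Your overall architecture --- lower-bound $\tv$ in terms of $H_t$ and $V_t$, control $V_t$ by $H_t$, then kill $H_t$ after $\tmix(1-\varepsilon)$ by log-Sobolev --- is coherent, and everything outside the step you flag as ``the only place where real work is required'' is indeed routine. The problem is that this flagged step is not just work you have deferred; it is a claim whose proposed justification does not hold up.

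Concretely, you assert that the covariance
\[
R_t \;=\; \int (\log f_t - H_t)\,f_t\,\Gamma(\log f_t)\,\dd\mu
\]
is non-positive, and you propose to obtain this from ``log-concavity of the transition density inherited from $\mathrm{CD}(\kappa,\infty)$ combined with the Dirac initial condition''. But $\mathrm{CD}(\kappa,\infty)$ does not, in general, produce a log-concave heat kernel: on a compact manifold with positive Ricci curvature (think of the sphere) there is no global notion of log-concavity, and even in the Euclidean case the object you need is the density $f_t$ \emph{relative to $\mu$}, not the transition density itself. Neither the chain rule nor $\Gamma_2$-calculus gives a sign for $R_t$ without further structure; $R_t$ is a genuine covariance between two nontrivial functionals of $X_t$, and controlling its sign for all $t$ and all $\mathrm{CD}(\kappa,\infty)$ diffusions is essentially as hard as the theorem. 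So as written the proof has a gap precisely at its load-bearing point.

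By contrast, the paper avoids any sign question entirely. Instead of bounding $V_t$ \emph{above} by $H_t$, it bounds $V_t$ \emph{below} by Fisher information via the local (time-uniform) Poincar\'e inequality, which is an immediate and standard consequence of $\mathrm{CD}(\kappa,\infty)$: applying $\Var[g(X_t)]\le\kappa^{-1}\EE[\Gamma g(X_t)]$ to $g=\log f_t$ yields directly
\[
\frac{\dd}{\dd t}\,\Ent(X_t)\;=\;-I_t\;\le\;-\kappa\,\Varent(X_t).
\]
The reverse-Pinsker lemma (your step 2, in a slightly different form) then supplies the lower bound $\Varent(X_t)\ge(\varepsilon\,\Ent(X_t)-1)^2$ for $t\ge \tmix(1-\varepsilon)$, closing a Riccati-type differential inequality for $\Ent(X_t)$ alone. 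Integrating and feeding the resulting entropy bound into the spectral-gap mixing estimate finishes the proof. The point is that Poincar\'e gives the needed relation between $V_t$ and $\dot H_t$ for free, with no covariance to sign; your route would need a genuinely new inequality where the paper needs only a classical one.
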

Since we classically have $\lambda\ge\kappa$, the criterion (\ref{PC2}) does not improve upon the product condition. However, the estimate on the width of the mixing window (\ref{window}) can be significantly sharper than the one provided by Theorem \ref{th:main}.

\paragraph{Worst-case mixing times.} In the literature on the cutoff phenomenon, it is fairly standard to consider \emph{worst-case} mixing times, of the form
\begin{eqnarray*}
\tmix^{(S)}(\varepsilon) & := & \max_{x\in S}\tmix^{(x)}(\varepsilon),
\end{eqnarray*}
where $S\subseteq E$ is a particular set of initial configurations, and where we have written   $\tmix^{(x)}(\varepsilon)$ to indicate the dependency on the initial state. Our result is strong enough to cover this extension as well. Indeed, Theorem \ref{th:main} asserts that for all $x\in E$, 
\begin{eqnarray*}
\tmix^{(x)}(\varepsilon) & \le & \tmix^{(x)}(1-\varepsilon)  + \frac{3}{\lambda\varepsilon^3}+3\sqrt{\frac{\tmix^{(x)}(1-\varepsilon)}{\lambda\varepsilon^3}},
\end{eqnarray*}
so we may take a maximum over $x\in S$ to deduce that the same statement holds with  $\tmix^{(S)}$ instead of $\tmix^{(x)}$. This remark of course also applies to Corollary \ref{co:main} and Theorem \ref{th:positive}. On compact manifolds, the standard choice is $S=E$, in which case the product condition is actually well known to be necessary for cutoff. Thus, Corollary \ref{co:main} completely characterizes cutoff for non-negatively curved diffusions on compact manifolds.

\paragraph{Special cases.}  Because it only requires rough lower bounds on the spectral gap and the mixing time, the product condition is easily verified in many models of interest. As a consequence, Corollary \ref{co:main} vastly simplifies, unifies and generalizes a number of isolated proofs of cutoff that were obtained through an in-depth, model-specific analysis of the underlying semi-group. Some emblematic examples include Brownian motion on high-dimensional spheres \cite{MR1306030}, Ornstein-Uhlenbeck processes \cite{MR3770869}, the Dyson-Ornstein-Uhlenbeck process \cite{MR4528974},  as well as certain spectrally rigid diffusions \cite{chafaï2024cutoffrigidityhighdimensional}.

\section{Proof}
Following the ideas exposed at length in \cite{MR4780485,MR4765357}, we let the key role be played by an information-theoretic notion known as \emph{varentropy}, which we now recall. Let $X$ be a random variable taking values in our workspace $(E,\cE)$, and admitting a density $f$ with respect to the reference measure $\mu$. The \emph{information content} of $X$ is defined as  
\begin{eqnarray*}
\I_X & := & \log f(X).
\end{eqnarray*}
The usual definition has a minus sign \cite{MR2239987}, but we find this formulation more convenient since the expectation of $\I_X$ is then exactly the relative entropy of $X$ with respect to $\mu$:
\begin{eqnarray*}
\Ent(X) & := & \EE[\I_X] \ = \ \int_E f\log f\,\dd \mu.
\end{eqnarray*} 
The varentropy of $X$ is obtained by replacing the above expectation with a variance:
\begin{eqnarray*}
\Varent(X) & := & \Var(\I_X) \ = \ \int_E f(\log f)^2\,\dd \mu - \left(\int_E f\log f\,\dd \mu\right)^2.
\end{eqnarray*}
This natural statistics controls  the fluctuations of the information content around the entropy. It appeared a decade ago in the context of optimal data compression \cite{inproceedings}, but has since then been shown to play an important role in  completely different areas, such as importance sampling \cite{MR3784496} or the cutoff phenomenon \cite{MR4780485,MR4765357,hermon2024concentrationinformationdiscretegroups}. Its main interest here lies in the fact that it allows one to reverse the celebrated Pinsker inequality, i.e., to obtain an upper-bound on the relative entropy in terms of the total-variation distance. This is the content of the following  lemma, borrowed from  \cite[Lemma 8]{MR4780485}.
\begin{lemma}[Reverse Pinsker's inequality]\label{lm:pinsker}We always have
\begin{eqnarray}
\Ent(X) & \le & \frac{1+\sqrt{\Varent(X)}}{1-\tv(X)}.
\end{eqnarray}
\end{lemma}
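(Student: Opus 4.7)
The strategy combines two classical inequalities, bridged by the definition of total-variation distance: Chebyshev's inequality applied to $\log f(X)$ under $\PP$, and Markov's inequality applied to $f$ under $\mu$. The target inequality, rewritten multiplicatively as $\Ent(X)(1-\tv(X)) \le 1 + \sqrt{\Varent(X)}$, suggests that $1-\tv(X)$ should act as a damping factor on the entropy that we should extract from a saddle-point-style bound.

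The plan is to introduce a threshold $\theta > 0$ and the associated event $B_\theta := \{f(X) > e^{\Ent(X) - \theta}\}$. Under $\PP$, the information content $\log f(X)$ has mean $\Ent(X)$ and variance $\Varent(X)$, so Chebyshev's inequality gives $\PP(B_\theta) \ge 1 - \Varent(X)/\theta^2$. Under $\mu$, the normalization $\int f\,\dd\mu = 1$ combined with Markov's inequality yields $\mu(B_\theta) \le e^{\theta - \Ent(X)}$. Finally, the definition of total variation applied to $B_\theta$ provides the master estimate
\begin{equation*}
1 - \tv(X) \;\le\; \frac{\Varent(X)}{\theta^2} + e^{\theta - \Ent(X)}.
\end{equation*}

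The last step is to solve for $\Ent(X)$ and optimize over $\theta$. A natural choice is $\theta$ of order $\sqrt{\Varent(X)}$; combined with the elementary logarithmic bound $-\log(1-u) \le u/(1-u)$ (valid for $u \in [0,1)$), this produces an estimate of the general form $\Ent(X) \le (1 + c\sqrt{\Varent(X)})/(1-\tv(X))$ for some explicit constant~$c$. The main obstacle I anticipate is the algebraic optimization itself: a direct implementation tends to yield a constant $c>1$, or alternatively a weaker shape like $\sqrt{\Varent(X)/(1-\tv(X))} + \log(2/(1-\tv(X)))$, so recovering the sharp value $c=1$ stated in the lemma will likely require a more subtle choice of $\theta$ depending jointly on $\Varent(X)$ and $\tv(X)$, or replacing Chebyshev with its one-sided (Cantelli) refinement.
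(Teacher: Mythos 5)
The paper does not actually prove this lemma: it imports it verbatim from Lemma~8 of the cited reference, so your attempt has to be judged against that reference's short argument. Your two ingredients are the right ones — Markov's inequality under $\mu$ on the super-level sets $\{f>e^{\theta}\}$ and the definition of total variation applied to such a set — and your master estimate $1-\tv(X)\le \Varent(X)/\theta^{2}+e^{\theta-\Ent(X)}$ is correct. The genuine gap is precisely the step you flag at the end: with a single threshold and two-sided Chebyshev, no choice of $\theta$, however subtle, can produce the stated constant, because the conclusion is simply not a logical consequence of that master estimate. Concretely, the values $\Ent(X)=12$, $\Varent(X)=100$, $\tv(X)=10^{-3}$ satisfy $\Varent(X)/\theta^{2}+e^{\theta-\Ent(X)}\ge 1-\tv(X)$ for every $\theta>0$ (the left-hand side has minimum $\approx 1.12$ near $\theta\approx 10.3$), yet they violate the target bound $(1+\sqrt{\Varent(X)})/(1-\tv(X))\approx 11.01$. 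So optimizing $\theta$ cannot close the argument; replacing Chebyshev by Cantelli may help, but you have not carried that out, and it would require checking a two-parameter inequality in all regimes of $\Varent$ and $\tv$.

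The clean way to finish, keeping exactly your two ingredients, is to integrate over all thresholds instead of choosing one, and to control the excess of $\I_X$ over its mean by Cauchy--Schwarz rather than Chebyshev. Set $H:=\Ent(X)\ge 0$ and $d:=\tv(X)$, and assume $d<1$ and $H,\Varent(X)<\infty$ (otherwise the bound is trivial, noting that $\EE[\I_X^-]\le 1/e$ forces $\Varent(X)=\infty$ whenever $H=\infty$). Pointwise, $\I_X\le (\I_X\wedge H)_++(\I_X-H)_+$, hence
\begin{eqnarray*}
H & \le & \int_0^{H}\PP\left(\I_X>\theta\right)\dd\theta+\EE\left[(\I_X-H)_+\right]
\ \le\ \int_0^{H}\left(e^{-\theta}+d\right)\dd\theta+\sqrt{\EE\left[(\I_X-H)^{2}\right]}
\ \le\ 1+dH+\sqrt{\Varent(X)},
\end{eqnarray*}
where the middle inequality uses $\PP(\I_X>\theta)\le \mu\left(f>e^{\theta}\right)+d\le e^{-\theta}+d$ (your Markov-plus-total-variation step, now applied for every $\theta\in[0,H]$) together with Cauchy--Schwarz. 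Rearranging gives $(1-d)H\le 1+\sqrt{\Varent(X)}$, which is exactly the lemma with the sharp constant; this layer-cake argument is essentially the one behind the cited Lemma~8, and it is the missing piece that your single-threshold plan cannot supply.
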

We also borrow from the same work an estimate on the mixing time of any reversible Markov process, based on its spectral gap and initial entropy; see \cite[Lemma 7]{MR4780485}.
\begin{lemma}[Mixing-time estimate]\label{lm:gap}Let $(X_t)_{t\ge 0}$ be a reversible Markov process with spectral gap $\lambda$, starting from a (possibly random) initial condition $X_0$. Then, 
\begin{eqnarray*}
\forall \varepsilon\in(0,1),\qquad \tmix(\varepsilon) & \le & \frac{1+\Ent(X_0)}{\lambda\varepsilon}.
\end{eqnarray*}
\end{lemma}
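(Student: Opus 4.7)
My approach combines three ingredients: (i) reversibility, which forces $t \mapsto \tv(X_t)$ to be non-increasing because $P_t$ is a contraction on $L^1(\mu)$; (ii) the entropy-dissipation identity $\frac{\dd}{\dd t}\Ent(X_t) = -I(f_t)$, where $I(f) := \int \Gamma(f,\log f)\,\dd\mu$ is the Fisher-type information of the current density $f_t$; and (iii) the spectral gap, which converts $I(f_t)$ into a quantitative lower bound involving $\tv(X_t)$. The aim is to integrate the resulting differential inequality and use monotonicity of $\tv(X_s)$ to read off a mixing-time bound of Markov type.

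The crux is the chain of inequalities
\begin{eqnarray*}
I(f_t) & = & 4\int \Gamma\big(\sqrt{f_t}\big)\,\dd\mu \;\ge\; 4\lambda\,\Var\big(\sqrt{f_t}\big) \;\ge\; 4\lambda\,\tv(X_t)^2.
\end{eqnarray*}
The first equality follows from the chain rule (assumption (i)) applied successively to $\phi = \log$ and $\phi(x)=\sqrt{x}$; in the general reversible Markov chain setting one instead invokes the pointwise inequality $(a-b)(\log a - \log b) \ge 4(\sqrt{a}-\sqrt{b})^2$. The middle inequality is the spectral gap applied to $\sqrt{f_t}$, which has unit $L^2(\mu)$ norm. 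The last is a Cauchy-Schwarz comparison between total variation and Hellinger distance: $\tv(X) = \frac{1}{2}\int |\sqrt{f}-1|(\sqrt{f}+1)\dd\mu \le \sqrt{\Var(\sqrt{f})}$.

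Integrating the entropy-dissipation identity over $[0,t]$ and using the monotonicity of $\tv(X_s)$ then yields
\begin{eqnarray*}
4\lambda\, t\, \tv(X_t)^2 & \le & \int_0^t 4\lambda\, \tv(X_s)^2 \,\dd s \;\le\; \Ent(X_0) - \Ent(X_t) \;\le\; \Ent(X_0),
\end{eqnarray*}
which already delivers a mixing-time bound of the form $\tmix(\varepsilon) \le \Ent(X_0)/(4\lambda\varepsilon^2)$. To upgrade the $1/\varepsilon^2$ dependence into the sharper $1/\varepsilon$ announced in the lemma, and to produce the ``$1+$'' correction in the numerator, I would split the trajectory into a burn-in phase (where $\tv(X_s) \ge 1/2$, so that $I(f_s) \gtrsim \lambda$ and entropy drops at an essentially constant rate, consuming time $O((1+\Ent(X_0))/\lambda)$) followed by a near-equilibrium phase, where Reverse Pinsker (Lemma~\ref{lm:pinsker}) provides a much tighter conversion between entropy and total variation. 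I expect this near-equilibrium refinement to be the main obstacle, since it requires simultaneous control of the varentropy $\Varent(X_s)$ along the trajectory, which is not \emph{a priori} dominated by the initial entropy $\Ent(X_0)$.
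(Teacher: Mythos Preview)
The paper does not give a proof of this lemma at all: it is quoted verbatim from \cite[Lemma~7]{MR4780485} and used as a black box. So there is no in-text argument to compare against beyond the citation.

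On its merits, your derivation up to the bound $\tmix(\varepsilon)\le \Ent(X_0)/(4\lambda\varepsilon^{2})$ is correct and is exactly the standard route: entropy dissipation, Poincar\'e applied to $\sqrt{f_t}$, and the Hellinger--total-variation comparison $\tv(X_t)^2\le \Var(\sqrt{f_t})$. That weaker bound would already be enough for the paper's purposes --- plugging it into \eqref{gap:t} in place of Lemma~\ref{lm:gap} yields Theorem~\ref{th:main} with $\varepsilon^{-4}$ in lieu of $\varepsilon^{-3}$, and Corollary~\ref{co:main} unchanged.

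Where your proposal breaks down is the upgrade from $\varepsilon^{-2}$ to $\varepsilon^{-1}$. The lemma is stated for an \emph{arbitrary} reversible Markov process with a spectral gap: no diffusion property, no curvature, and in particular no control whatsoever on $\Varent(X_s)$. Your suggested two-phase argument would feed Lemma~\ref{lm:pinsker} back into the proof, which requires precisely such a varentropy bound along the trajectory. None is available at this level of generality, and producing one under a curvature hypothesis is the content of Lemma~\ref{lm:diff} --- i.e., the main new idea of the paper. So your proposed refinement inverts the logical architecture: in the paper, Lemma~\ref{lm:gap} is a general input established \emph{before} any varentropy estimate, and the varentropy is controlled afterwards and only for curved diffusions. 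Routing the proof of Lemma~\ref{lm:gap} through Lemma~\ref{lm:pinsker} cannot recover the lemma as stated; you should either look up the cited argument or be content with the $\varepsilon^{-2}$ version, which is harmless here.
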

Note that when $X_0$ is deterministic and $\mu$ is diffuse (which is the case in our examples), this result seems  useless since the right-hand side is infinite. However, by the Markov property, the conclusion automatically ``propagates'' in time as follows: for any $t\ge 0$,
\begin{eqnarray}
\label{gap:t}
\tmix(\varepsilon) & \le & t+\frac{1+\Ent(X_t)}{\lambda\varepsilon}.
\end{eqnarray}
In particular, we may take $t=\tmix(1-\varepsilon)$ and combine this with Lemma \ref{lm:pinsker} to arrive at
\begin{eqnarray*}
\wmix(\varepsilon) & \le & \frac{2}{\lambda\varepsilon^2}+\frac{1}{\lambda\varepsilon^2}\sqrt{\Varent(X_t)}. 
\end{eqnarray*}
This general estimate on the width of the mixing window leads to the \emph{varentropy criterion} for cutoff discovered in \cite{MR4780485}. Exploiting the latter in the present setting would require us to estimate  the varentropy of non-negatively curved diffusions at the mixing time, in a similar spirit as what was done in the discrete setting in \cite{MR4780485,hermon2024concentrationinformationdiscretegroups}. This is, however, \emph{not} the approach that we pursue here. Instead, we exploit  the  new crucial observation that for non-negatively curved diffusions, entropy and varentropy are related through a simple differential inequality. 
\begin{lemma}[Information-theoretic differential inequality]\label{lm:diff}
If $(X_t)_{t\ge 0}$ is a non-negatively curved diffusion starting from a deterministic state, then for all $t>0$ 
\begin{eqnarray*}
\frac{\dd}{\dd t}\,\Ent(X_t) & \le & -\frac{\Varent(X_t)}{2t}.
\end{eqnarray*}
\end{lemma}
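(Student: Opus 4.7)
The plan is to combine a De Bruijn--type identity for $\frac{\dd}{\dd t}\Ent(X_t)$ with the local Poincaré inequality applied to the specific function $g := \log f_t$, both evaluated at the starting point $x$. Since $X_0=x$ is deterministic, any test function $h$ satisfies $\EE[h(X_t)] = P_th(x) = \int h\,f_t\,\dd\mu$; taking $h$ equal to $g$, $g^2$, and $\Gamma(g)$ in turn recasts the two statistics of interest as
\begin{equation*}
\Varent(X_t) \;=\; P_t(g^2)(x)-(P_tg)^2(x), \qquad \EE[\Gamma(g)(X_t)] \;=\; P_t(\Gamma g)(x).
\end{equation*}
Hence the whole argument reduces to a pointwise statement at $x$.

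First I would compute $\frac{\dd}{\dd t}\Ent(X_t)$ by differentiating under the integral. The Kolmogorov forward equation $\partial_tf_t = Lf_t$ (itself a consequence of the self-adjointness of $P_t$), combined with integration by parts and $\int Lf_t\,\dd\mu=0$, yields
\begin{equation*}
\frac{\dd}{\dd t}\Ent(X_t) \;=\; \int Lf_t\cdot\log f_t\,\dd\mu \;=\; -\int\Gamma(f_t,\log f_t)\,\dd\mu.
\end{equation*}
Applying the chain rule (i) with $\phi=\exp$ to $f_t=\exp(\log f_t)$ gives $\Gamma(f_t,\log f_t)=f_t\,\Gamma(\log f_t)$, so that $\frac{\dd}{\dd t}\Ent(X_t) = -P_t(\Gamma g)(x)$.

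The crux is then the local Poincaré inequality
\begin{equation*}
P_t(g^2)-(P_tg)^2 \;\le\; 2t\,P_t(\Gamma g),
\end{equation*}
a classical consequence of $\mathrm{CD}(0,\infty)$. I would prove it by the standard Bakry--Émery interpolation: set $\phi(s):=P_s\bigl((P_{t-s}g)^2\bigr)$ for $s\in[0,t]$; the identity $L(u^2)=2uLu+2\Gamma(u)$ together with the commutation of $L$ with $P_s$ gives $\phi'(s)=2P_s\bigl(\Gamma(P_{t-s}g)\bigr)$, and the curvature hypothesis (ii) bounds this pointwise above by $2P_t(\Gamma g)$. Integrating over $s\in[0,t]$, evaluating at $x$, and inserting the two rewrites above produces $\Varent(X_t)\le 2t\,P_t(\Gamma g)(x) = -2t\,\frac{\dd}{\dd t}\Ent(X_t)$, which is the claim.

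The main technical obstacle I foresee is justifying that each formal manipulation above --- differentiation under the integral, integration by parts, the chain rule, and the curvature inequality applied inside the interpolation --- is legitimate for the potentially unbounded function $g=\log f_t$. However, the \emph{ad hoc} domain assumptions stated just before Example~\ref{ex:1} (that $f_t$, $\log f_t$, and their squares and product lie in $\mathrm{Dom}(L)$, with chain rule and curvature valid for them) are precisely tailored to cover these steps, so no additional regularity work should be required.
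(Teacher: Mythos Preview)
Your proposal is correct and follows essentially the same route as the paper: compute the entropy dissipation via the de Bruijn identity $\frac{\dd}{\dd t}\Ent(X_t)=-\int\Gamma(f_t,\log f_t)\,\dd\mu$, rewrite it (via the chain rule) as $-\EE[\Gamma(\log f_t)(X_t)]$, and then bound $\Varent(X_t)$ above by $2t\,\EE[\Gamma(\log f_t)(X_t)]$ using the local Poincar\'e inequality from $\mathrm{CD}(0,\infty)$. The only cosmetic difference is that the paper invokes the local Poincar\'e inequality as a known consequence of (ii), whereas you spell out the standard $s\mapsto P_s((P_{t-s}g)^2)$ interpolation; your remark that the domain assumptions preceding Example~\ref{ex:1} are precisely what legitimize each step is exactly right.
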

\begin{proof}
The sub-commutation property $\Gamma P_t g\le P_t\Gamma g$ for $t\ge 0$ stated at (ii) easily and classically guarantees the local Poincaré inequality
\begin{eqnarray}
\label{local}
\Var\left[g(X_t)\right] & \le & 2t\,\EE\left[\Gamma g(X_t)\right],
\end{eqnarray}
for all $t\ge 0$. Choosing $g=\log f_t$ and using the chain rule (i) with $\phi=\log$, we obtain 
\begin{eqnarray*}
\Varent(X_t) & \le & 2t\,\EE\left[\frac{\Gamma(f_t,\log f_t)(X_t)}{f_t(X_t)}\right].
\end{eqnarray*}
On the other hand, using reversibility and the fact that $\frac{\dd}{\dd t}f_t=Lf_t$, we have
\begin{eqnarray*}
-\frac{\dd}{\dd t}\,\Ent(X_t) & = & \int_{E}{\Gamma(f_t,\log f_t)}\, \dd\mu  \ = \ 
\EE\left[\frac{\Gamma(f_t,\log f_t)(X_t)}{f_t(X_t)}\right].
\end{eqnarray*}
The claim readily follows. 
\end{proof}
We now have everything we need to prove our main result. 
\begin{proof}[Proof of Theorem \ref{th:main}]Fix $\varepsilon\in(0,\frac 12)$ and set $t_0:=\tmix(1-\varepsilon)$. Combining Lemmas  \ref{lm:pinsker} and \ref{lm:diff}, we see that on $[t_0,\infty)$, the function $t\mapsto \Ent(X_t)$ obeys the differential inequality
\begin{eqnarray}
\label{diff}
\frac{\dd}{\dd t}\,\Ent(X_t) & \le & -\frac{\left(\varepsilon\Ent(X_t)-1\right)^2}{2t}.
\end{eqnarray}
Integrating this inequality, we obtain
\begin{eqnarray*}
\frac{1}{\varepsilon\Ent(X_t)-1} & \ge & \frac{1}{\varepsilon\Ent(X_{t_0})-1}+\frac{\varepsilon}{2}\log\frac{t}{t_0},
\end{eqnarray*}
provided $t> t_0$ and $\Ent(X_t)>\frac{1}{\varepsilon}$. Consequently, for all $t> t_0$,
\begin{eqnarray*}
\Ent(X_t) & \le & \frac{1}{\varepsilon}+\frac{2}{\varepsilon^2\log\frac{t}{t_0}}.
\end{eqnarray*}
Invoking Lemma \ref{lm:gap}, or rather its propagated version (\ref{gap:t}), we conclude that
\begin{eqnarray*}
\tmix(\varepsilon) & \le & t+\frac{1}{\lambda\varepsilon}+\frac{1}{\lambda\varepsilon^2}+\frac{2}{\lambda\varepsilon^3\log\frac{t}{t_0}}\\
& \le & t+\frac{3}{\lambda\varepsilon^3}+\frac{2t_0}{\lambda\varepsilon^3(t-t_0)},
\end{eqnarray*}
where the second line uses $\log u\ge 1-\frac{1}{u}$ and $\varepsilon\le 1/2$. Now, this bound is valid for any $t>t_0$, so we may finally optimize it by choosing $t=t_0+\sqrt{\frac{2t_0}{\lambda\varepsilon^3}}$ to obtain
\begin{eqnarray*}
\tmix(\varepsilon) & \le & t_0+\frac{3}{\lambda\varepsilon^3}+\sqrt{\frac{8t_0}{\lambda\varepsilon^3}}. 
\end{eqnarray*}
Recalling that $t_0=\tmix(1-\varepsilon)$, this is enough to conclude. 
\end{proof}
\begin{proof}[Proof of Theorem \ref{th:positive}]The stronger condition (\ref{cdk}) implies the time-uniform Poincaré inequality
\begin{eqnarray*}
\Var\left[g(X_t)\right] & \le & \frac{1}{\kappa}\,\EE\left[\Gamma g(X_t)\right],
 \end{eqnarray*}
instead of (\ref{local}). Thus, the differential inequality (\ref{diff}) in the above proof simplifies to
\begin{eqnarray*}
\frac{\dd}{\dd t}\,\Ent(X_t) & \le & -\kappa\left(\varepsilon\Ent(X_t)-1\right)^2.
\end{eqnarray*}
Integrating leads to the bound
\begin{eqnarray*}
\Ent(X_t) & \le & \frac{1}{\varepsilon}+\frac{1}{\varepsilon^2\kappa(t-t_0)},
\end{eqnarray*}
so that (\ref{gap:t}) along with the estimate $\lambda\ge\kappa$ now yields
\begin{eqnarray*}
\tmix(\varepsilon) & \le & t+\frac{1}{\kappa\varepsilon}+\frac{1}{\kappa\varepsilon^2}+\frac{1}{\kappa^2\varepsilon^3(t-t_0)}.
\end{eqnarray*}
This is valid for any $t> t_0$, and choosing $t=t_0+\kappa^{-1}\varepsilon^{-3/2}$ concludes the proof.
\end{proof}

\section*{Acknowledgment.}This work is supported by the ERC consolidator grant CUTOFF (101123174). Views and opinions expressed are however those of the authors only and do not necessarily reflect those of the European Union or the European Research Council Executive Agency. Neither the European Union nor the granting authority can be held responsible for them.

\bibliographystyle{plain}
\bibliography{draft}

\end{document}